\theoremstyle{definition}
\newtheorem{thm}{Theorem}[section]
\newtheorem{cor}[thm]{Corollary}
\newtheorem{prop}[thm]{Proposition}
\newtheorem{theo}[thm]{Theorem}
\newtheorem{rem}[thm]{Remark}
\numberwithin{equation}{section}
\newcommand{\spinc}{$\mathrm{Spin}^c\;$}
\begin{document}
\title[]{Heegaard Floer homology of Some Mazur Type Manifolds}%
\author{ Selman Akbulut, \c{C}a\u{g}ri Karakurt}%
\thanks{The first named author is partially supported by NSF grant DMS 9971440}
\thanks{ The second named author is  supported by a Simons fellowship.}
\thanks{Both authors are partially supported by NSF Focused Research Grant DMS-1065955.}
\address{Department of Mathematics Michigan State University \\
         East Lansing 48824 MI, USA }%
\email{ akbulut@math.msu.edu}
\address{Department of Mathematics
The University of Texas at Austin\\
2515 Speedway, RLM 8.100
Austin, TX 78712 }%
\email{ karakurt@math.utexas.edu}%

\subjclass{58D27,  58A05, 57R65}
\date{\today}
\begin{abstract}
We show that an infinite family of contractible $4$--manifolds have the same boundary as a special type of plumbing. Consequently their Ozsv\'ath--Szab\'o invariants can be calculated algorithmically. We run this algorithm for the first few members of the family and list the resulting Heegaard--Floer homologies. We also show that the rank of the Heegaard--Floer homology can get arbitrary large values in this family by using its relation with the Casson invariant. For comparison, we list the ranks of Floer homologies of all the examples of Briekorn spheres that are known to bound contractible manifolds.
   
\end{abstract}


%
\maketitle

\section{Mazur Type Manifolds}
A $4$--manifold is said to be Mazur type if it admits a handle decomposition consisting of a single handle at each index $\{0,1,2\}$ where the $2$-handle  is attached in such a way that it cancels the $1$--handle algebraically, \cite{AK}. For $n\in \mathbb{N}$ and $k\in\mathbb{Z}$, let $W_n(k)$ be the Mazur type manifold whose handlebody picture is as drawn in Figure \ref{fig:Cork}.  The main purpose of this note is to calculate the Heegaard Floer homology group $HF^+(\partial W_n(k))$ for every $k$ and $n$. We are especially interested in the case where $k=0$, because the manifolds $W_n(0)$ are the corks which have been extensively used in the construction exotic smooth structures on $4$-manifolds, \cite{AY}, \cite{AKa}. Understanding the Floer homology of their boundary would be the first step in finding out how the smooth 4-manifold invariants change under surgery along these manifolds. Another related family will be discussed in section \ref{sec:Bries}. 
\begin{figure}[h]
	\includegraphics[width=0.35\textwidth]{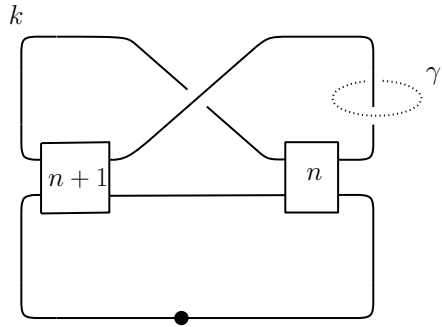}
		\caption{The Mazur type manifold $W_n(k)$.}
	\label{fig:Cork}
\end{figure}

We first review some standard facts from \cite{OS4}, \cite{OS5}, and \cite{OS2}.  Heegaard Floer homology of any $3$-manifold $Y$  is a $\mathbb{Z}/2\mathbb{Z}$ graded abelian group which splits as a direct sum with respect to the \spinc structures on $Y$.  For any \spinc structure $\mathfrak{t}$, the corresponding group $HF^+(Y,\mathfrak{t})$ admits an endomorphism $U$ which preserves $\mathbb{Z}/2\mathbb{Z}$ grading.  This endomorphism equips Heegaard Floer homology with the structure of a $\mathbb{Z}[U]$ module. When a \spinc structure is torsion, the associated component of Heegaard Floer homology admits an absolute $\mathbb{Q}$--grading. In this case $U$ has degree $-2$. When $Y$ is an integral homology sphere, there is a single \spinc component where the absolute  grading in fact takes integer values. In this case the parity of this $\mathbb{Z}$--grading agrees with the  $\mathbb{Z}/2\mathbb{Z}$ grading, and the following splitting occurs:

\begin{equation}\label{eqn:str}
HF^+(Y)=\mathcal{T}^+_{(d)}\oplus HF^{\mathrm{red}}(Y)
\end{equation}

\noindent where $HF^{\mathrm{red}}(Y)$ is a finitely generated abelian group, and $\mathcal{T}^+_{(d)}$ is a copy of $\mathbb{Z}[U,U^{-1}]/ U \mathbb{Z}[U] $ graded so that the minimal degree element has degree $d$. This minimal degree is called the correction term of $Y$. Moreover, the Euler characteristic of the reduced part satisfies

$$\chi (HF^{\mathrm{red}}(Y))= \lambda (Y)+ d(Y)/2$$

\noindent where $\lambda (Y)$ is the Casson invariant of $Y$, \cite{AM}.

It is evident from the definition that any Mazur type manifold $W$ is contractible, so its boundary is an integral homology sphere. Therefore $HF^+(\partial W)$ has the structure mentioned above. In addition to that, we have the following restrictions.

\begin{prop}\label{prop:str}
Suppose that an integral homology sphere bounds a contractible manifold. Then its correction term is zero and its Casson invariant is even.   
\end{prop}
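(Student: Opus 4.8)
The plan is to prove the two conclusions independently. Write $Y=\partial W$ with $W$ contractible; then $W$ is an integral homology ball, i.e. $H_\ast(W)\cong H_\ast(\mathrm{pt})$, so in particular $b_2(W)=0$, $H_1(W)=0$, and $H^2(W;\mathbb{Z}/2)=0$. These vanishings are the only features of $W$ I will use.

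For the correction term, the cleanest route is homology-cobordism invariance of $d$. Removing an open ball from the interior of $W$ produces a smooth cobordism from $Y$ to $S^3$ which, because $W$ is acyclic, is an integral homology cobordism; since $d$ is an invariant of integral homology cobordism \cite{OS4} and $d(S^3)=0$, we conclude $d(Y)=0$. Equivalently, one may argue directly from the Ozsv\'ath--Szab\'o negative-definite filling inequality $c_1(\mathfrak{s})^2+b_2^-(X)\le 4\,d(Y,\mathfrak{s}|_Y)$: a contractible $W$ is (vacuously) negative definite with $b_2^-(W)=0$ and unique $\mathrm{Spin}^c$ structure having $c_1=0$, so the inequality gives $d(Y)\ge 0$; applying it to the contractible manifold $-W$, which bounds $-Y$, gives $d(-Y)\ge 0$, and since $d(-Y)=-d(Y)$ we also get $d(Y)\le 0$, hence $d(Y)=0$.

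For the Casson invariant I would reduce modulo $2$ to the Rokhlin invariant. Since $H^2(W;\mathbb{Z}/2)=0$ forces $w_2(W)=0$, the manifold $W$ is spin, and $b_2(W)=0$ gives $\sigma(W)=0$; thus the Rokhlin invariant of $Y$ is $\mu(Y)\equiv \sigma(W)/8\equiv 0 \pmod 2$. Casson's congruence $\lambda(Y)\equiv\mu(Y)\pmod 2$ \cite{AM} then shows $\lambda(Y)$ is even.

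The real content is entirely in the two quoted black boxes: the homology-cobordism invariance (equivalently, the filling inequality) for $d$, and the mod-$2$ reduction of $\lambda$ to the Rokhlin invariant. I expect the only delicate point to be orientation bookkeeping in the filling-inequality version, since that argument succeeds precisely by playing $W$ off against $-W$; the cobordism-invariance formulation sidesteps this. I would \emph{not} attempt to deduce the parity of $\lambda(Y)$ from the displayed formula $\chi(HF^{\mathrm{red}}(Y))=\lambda(Y)+d(Y)/2$, since once $d(Y)=0$ is known this only identifies $\lambda(Y)$ with an integer and retains no parity information.
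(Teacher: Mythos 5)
Your proposal is correct and follows essentially the same route as the paper: the correction term vanishes by homology-cobordism invariance of $d$ applied to $W$ minus a ball, and the parity of $\lambda$ follows from Casson's congruence with the Rokhlin invariant together with $\sigma(W)=0$ for the spin filling $W$. The alternative derivation of $d(Y)=0$ via the negative-definite filling inequality is a valid extra, but the core argument matches the paper's.
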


\begin{proof}
The correction term of a homology sphere  is invariant under rational homology cobordisms. Any contractible manifold $W$ is an integral homology ball. We can also regard it as an integral homology cobordism from  $S^3$ to $\partial W$, by removing a small ball from it. Since the correction term of the $3$-sphere is zero,  we have $d(\partial W)=0$. The second statement is a consequence of the fact that the Mod $2$ reduction of the Casson invariant of any integral homology sphere is in fact the Rohlin invariant, which can be computed from the signature of the Spin $4$--manifold it bounds. Since the signature of any contractible manifold is zero the second part follows.    
\end{proof}

Next we  prove that up to a grading change, the Heegaard Floer homology is independent of the framing of the $2$-handle of a Mazur type manifold. See \cite{S}, for an analogous statement in Instanton Floer homology.

\begin{prop}\label{indep}
Let $W(k)$ be a Mazur type manifold, $k$  being the framing  of the $2$-handle. Then for any $k,k'\in\mathbb{Z}$, there is a $U$ equivariant  isomorphism between $HF^+(\partial W(k))$ and  $HF^+(\partial W(k'))$ which preserves $\mathbb{Z}/2\mathbb{Z}$ gradings. Moreover, this isomorphism also preserves absolute $\mathbb{Q}$ gradings if the meridional circle $\gamma$ of the $2$-handle bounds a genus one Seifert surface in $\partial W(k)$ for every $k$.
\end{prop}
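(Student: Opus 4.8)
The plan is to reduce the statement to the case of consecutive framings: a $U$-equivariant, $\mathbb{Z}/2\mathbb{Z}$-graded isomorphism between $HF^+(\partial W(k))$ and $HF^+(\partial W(k+1))$ for every $k$ will compose to give the result for arbitrary $k,k'$. The geometric input is that the meridian $\gamma$ of the $2$-handle has exterior $M=\partial W(k)\setminus\nu(\gamma)$ independent of $k$: deleting $\gamma$ undoes the $2$-handle surgery, so $M$ is the exterior of the attaching circle $K$ in $\partial(\text{$0$-handle}\cup\text{$1$-handle})=S^1\times S^2$. The Dehn fillings of $M$ realizing the framings $k$ and $k+1$, together with the slope that performs no surgery on $K$, are pairwise of distance one, and the last filling returns $S^1\times S^2$. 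First I would record this and the resulting surgery exact triangle of \cite{OS4},
\[ \cdots \to HF^+(S^1\times S^2)\xrightarrow{h} HF^+(\partial W(k))\xrightarrow{f} HF^+(\partial W(k+1))\xrightarrow{g} HF^+(S^1\times S^2)\to\cdots, \]
whose maps are $U$-equivariant sums of $2$-handle cobordism maps.

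Next I would extract the module isomorphism from this triangle using two features of $S^1\times S^2$. First, $HF^\infty(S^1\times S^2)\cong\mathbb{Z}[U,U^{-1}]^2$ has rank two, whereas a homology sphere has rank one; a rank count in the induced $HF^\infty$ triangle then forces $f$ to vanish on $HF^\infty$. Second, $HF^{\mathrm{red}}(S^1\times S^2)=0$, so $HF^+(S^1\times S^2)$ is a sum of two towers with no $U$-torsion. Since all maps are $U$-equivariant, $g$ kills the reduced part of $HF^+(\partial W(k+1))$ and $h$ has image inside the tower of $HF^+(\partial W(k))$; writing $HF^+=\mathcal{T}^+\oplus HF^{\mathrm{red}}$ as $\mathbb{Z}[U]$-modules and chasing the sequence, one finds $\ker f=\mathrm{im}\,h$ must be the whole tower $\mathcal{T}^+(\partial W(k))$ (a proper submodule would make $\mathrm{im}\,f$ infinite, contradicting $\mathrm{im}\,f=HF^{\mathrm{red}}(\partial W(k+1))$). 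Hence $f$ restricts to an isomorphism $HF^{\mathrm{red}}(\partial W(k))\xrightarrow{\sim}HF^{\mathrm{red}}(\partial W(k+1))$ of $\mathbb{Z}[U]$-modules. By Proposition~\ref{prop:str} both correction terms vanish, so the towers $\mathcal{T}^+_{(0)}$ agree canonically; combining the canonical tower identification with $f$ on the reduced parts produces the desired $U$-equivariant, $\mathbb{Z}/2\mathbb{Z}$-graded isomorphism.

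For the absolute $\mathbb{Q}$-grading I would track the degree shift of $f$, which on the \spinc summand $\mathfrak{s}$ of the $2$-handle cobordism $X$ attached along $\gamma$ equals $(c_1(\mathfrak{s})^2-2\chi(X)-3\sigma(X))/4$. Here $c_1(\mathfrak{s})$ is evaluated on the closed surface $\hat F$ obtained by capping a Seifert surface of $\gamma$ with the core of the $2$-handle. When $\gamma$ bounds a genus-one Seifert surface in $\partial W(k)$ for every $k$, one has $2g(\hat F)-2=0$, and the adjunction inequality constrains $\langle c_1(\mathfrak{s}),[\hat F]\rangle$ tightly enough that the only \spinc structures contributing to the isomorphism on reduced parts have vanishing grading shift. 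Since the towers already match ($d=0$), this upgrades the isomorphism to one preserving the absolute $\mathbb{Q}$-grading.

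I expect the last step to be the main obstacle: proving that the cobordism-map degree shift is exactly zero, rather than merely even, requires pinning down $c_1(\mathfrak{s})^2$ and the self-intersection of $\hat F$, and it is precisely here that the genus-one hypothesis is indispensable. Without it one controls the shift only modulo two, which is enough for the $\mathbb{Z}/2\mathbb{Z}$-graded statement but not for the $\mathbb{Q}$-graded refinement. A secondary point to handle carefully is the identification $\ker f=\mathcal{T}^+(\partial W(k))$ in the diagram chase; I would make this rigorous by working over a field and comparing dimensions grade-by-grade, where every group is finite dimensional.
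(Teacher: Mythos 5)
Your route is the same as the paper's: the surgery exact triangle relating $\partial W(k)$, $\partial W(k+1)$ and $S^1\times S^2$, a chase isolating an isomorphism $f$ of the reduced parts (your $HF^\infty$ rank count is just another packaging of the paper's large-degree argument, and is fine), and then a \spinc--by--\spinc analysis of the connecting cobordism map $f=\sum_r\pm F^+_{X,\mathfrak{s}_r}$ to control gradings. The gap is in that last analysis. For the $\mathbb{Z}/2\mathbb{Z}$ statement you assert, but never verify, that every summand shifts degree by an even amount; this is the entire content of that claim and does need the computation: with $[\Sigma]^2=-1$ and $\langle c_1(\mathfrak{s}_r),[\Sigma]\rangle=2r+1$ one gets the shift $\left(c_1(\mathfrak{s}_r)^2-3\sigma(X)-2\chi(X)\right)/4=-r(r+1)$, even for all $r$. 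That part is easily repaired.

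The $\mathbb{Q}$-graded refinement has a more substantive gap. You say the genus-one hypothesis forces ``the only \spinc structures contributing to the isomorphism on reduced parts'' to have vanishing shift, but adjunction does not make the summands $F^+_{X,\mathfrak{s}_r}$ with $r\notin\{-1,0\}$ vanish --- it only makes them divisible by $U$. So the most you obtain is a decomposition $f=\phi+U\psi$, where $\phi$ (the $\mathfrak{s}_{-1},\mathfrak{s}_0$ part) is homogeneous of degree $0$ while $f$ itself remains non-homogeneous. Knowing $f$ is an isomorphism on $HF^{\mathrm{red}}$ therefore does not yet produce a grading-preserving isomorphism: you must show that $\phi$ alone restricts to an isomorphism of the reduced parts. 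The paper does this by filtering $HF^{\mathrm{red}}$ by $\ker(U)\subseteq\ker(U^2)\subseteq\cdots$, observing that $f$ and $\phi$ induce the same map on each quotient $\ker(U^t)/\ker(U^{t-1})$, and concluding inductively that $\phi$ is an isomorphism. This filtration step is the missing idea; the real obstacle is not, as you suggest, pinning down $c_1(\mathfrak{s})^2$ exactly (that is the same easy computation as above), but extracting a homogeneous isomorphism from a non-homogeneous one.
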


\begin{proof}
By Equation \ref{eqn:str} and proposition \ref{prop:str}, it suffices to show  that $HF^{\mathrm{red}}(W(k))$ is independent of $k$. To see this, we use the surgery exact triangle \cite{OS5}. Observe that $-1$ framed surgery on the curve $\gamma$ inside $\partial W_n(k)$ gives $\partial W(k+1)$ and $0$ surgery along the same curve gives $S^1\times S^2$. The Heegaard Floer homology of $S^1\times S^2$, which is supported in the torsion \spinc  structure $\mathfrak{t}_0$, is given by $HF^+(S^1\times S^2,\mathfrak{t}_0)=\mathcal T_{(1/2)}\oplus \mathcal T_{(-1/2) }$. The surgery exact triangle can be read as follows:

\begin{equation}\label{eq:exact}
\begindc{\commdiag}[10]
\obj(5,3)[N1]{$\cdots$}
\obj(13,3)[A]{$HF_p^+(\partial W(k+1))$}
\obj(25,3)[B]{$HF_{p-1/2}^+(S^1\times S^2,\mathfrak{t}_0)$}
\obj(37,3)[C]{$HF_{p-1}^+(\partial W(k))$}
\obj(45,3)[N2]{$\cdots$}
\mor{N1}{A}{$f_3$}[1,0]
\mor{A}{B}{$f_1$}[1,0]
\mor{B}{C}{$f_2$}[1,0]
\mor{C}{N2}{$f_3$}[1,0]
\enddc
\end{equation}

The homomorphisms $f_1$ and $f_2$ are homogeneous of degree $-1/2$. To see this recall that each one of $f_1$, $f_2$, and $f_3$ is induced from a $4$--dimensional cobordism. For example, $f_1$ corrpesponds to a $2$-handle attacthment along $\gamma$ with framing $0$ giving a cobordism $C$ with boundary $\partial C= -\partial W(k+1)\sqcup S^1\times S^2$. Now, every \spinc structure on $S^1\times S^2$ has a unique extension on $C$.   Therefore $f_1=\pm F_{C,\mathfrak{s}_0}$ where $\mathfrak{s}_0$ is the torsion \spinc structure on $C$, and $F_{C,\mathfrak{s}_0}$ is the homomorphism induced by the pair $(C,\mathfrak{s}_0)$. Then the  degree  of $f_1$ is given by $(c_1(\mathfrak{s}_0)^2-3\sigma(C)-2\chi (C))/4=-1/2$. That $f_2$ is homogeneous of degree $-1/2$ can be proven similarly.

Next we analyze the exact sequence \ref{eq:exact}. When $p$ is large there is no contribution coming from $HF^{\mathrm{red}}$, so all the groups appearing in the sequence \ref{eq:exact} are isomorphic to either $\mathbb{Z}$ or $0$. When $p$ is large and  even $f_2=0$, so $f_1$ is an isomorphism. Similarly, when $p$ is large and odd $f_2$ is an isomorphism. By $U$--equivariance, $f_1$ sends $\mathcal T^+_{(0)}\subset HF^+(\partial W_n(k+1))$  isomorphically to $\mathcal T^+_{(-1/2)} \subset HF^+(S^1\times S^2)$. Similarly, $f_2$ induces an isomorphism between $\mathcal T^+_{(1/2)} \subset HF^+(S^1\times S^2)$ and $\mathcal T^+_{(0)}\subset HF^+(\partial W(k))$. Hence we have $\mathrm{coker}(f_2)\simeq HF^{\mathrm{red}}(W(k))$ and $\mathrm{ker}(f_1)\simeq HF^{\mathrm{red}}(W(k+1))$. By exactness, $f_3$ induces an isomorphism between $HF^{\mathrm{red}}(W(k))$ and $HF^{\mathrm{red}}(W(k+1))$.

Unlike the other two homomorphisms appearing in the exact triangle, $f_3$ may not be homogeneous. Even so, we claim that it preserves $\mathbb{Z}/2\mathbb{Z}$ grading.  When we attach a $2$--handle along $\gamma \subset \partial W(k)$ with framing $-1$ and call the resulting cobordism $X$, the Seifert surface of $\gamma$ gives rise to a closed surface $\Sigma \subset X$ whose homology class generates $H_2(X,\mathbb{Z})\simeq \mathbb{Z}$. Let $\mathfrak{s}_r$, $r \in \mathbb{Z}$, be the \spinc structure on $X(k)$ with $\langle c_1(\mathfrak{s}_r),[\Sigma]\rangle=2r+1$. Denote by $F^+_{X,\mathfrak{s}_r}$, the homomorphism associated to the \spinc structure $\mathfrak{s}_r$. It is known that, 

\begin{equation}\label{eqn:cobsum}
f_3=\sum_{r\in \mathbb{Z}} \pm F^+_{X,\mathfrak{s}_r}
\end{equation}

\noindent for some choice of signs. Each $F^+_{X,\mathfrak{s}_r}$ is homogeneous of degree $(c_1(\mathfrak{s}_r)^2-3\sigma(X)-2\chi (X))/4=-r(r+1))$ which is even for every $r$.

For the last part, suppose the genus of $\Sigma$ is one. Then the \spinc structure $\mathfrak{s}_r$ satisfies 

$$ \left | \langle c_1(\mathfrak{s}_r),[\Sigma]\rangle \right |+[\Sigma]\cdot[\Sigma] > 2g-2$$

\noindent for every $r\in \mathbb{Z}-\{-1, 0\}$, so it admits a non-trivial adjunction relation, \cite{OS7}.  Hence by equation \ref{eqn:cobsum}, $f_3$ can be written as 

\begin{equation}\label{eqn:deco}
f_3=\phi+U\cdot \psi
\end{equation}

\noindent for some $\phi$ and $\psi$, where $\phi$ is the portion of $f_3$ coming from  $\mathfrak{s}_{-1}$ and $\mathfrak{s}_0$. The degree calculation above shows that $\phi$ is homogeneous of degree $0$. We will show that $\phi$ restricts to an isomorphism between $HF^{\mathrm{red}}(\partial W(k))$ and $HF^{\mathrm{red}}(\partial W(k+1))$. Both of these groups are filtered as follows:

$$\mathrm{ker}(U) \subseteq \mathrm{ker}(U^2) \subseteq \cdots  \subseteq \mathrm{ker}(U^s)=HF^{\mathrm{red}}.$$

\noindent The isomorphism $f_3$ respects this filtration. In particular it induces an isomorphism on $\mathrm{ker}(U^t)/\mathrm{ker}(U^{t-1})$ for all $t=1,2,\cdots,s$. By equation \ref{eqn:deco}, the restriction of $f_3$ on each of these groups is the same as $\phi$. Having  seen that $\phi$ is an isomorphism on  each $\mathrm{ker}(U^t)/\mathrm{ker}(U^{t-1})$, we conclude that $\phi$ is an isomorphism on the whole $HF^{\mathrm{red}}(\partial W(k))$.

\end{proof}

Now we focus our attention to the family of Mazur type manifolds $W_n(k)$ of Figure \ref{fig:Cork}. In view of the previous proposition, one would like to have a genus one Seifert surface of the curve $\gamma \subset \partial W_n(k)$ for every $n$ and $k$. So far the authors neither constructed nor ruled out the existence of such a surface when  $n\neq 1$. The best they achive is the following:

\begin{prop}\label{prop:seif}
The curve $\gamma$ in Figure \ref{fig:Cork} bounds a surface of genus $n$ in $\partial W_n(k)$. 
\end{prop}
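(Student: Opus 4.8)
The plan is to construct a genus $n$ surface by hand directly from the handlebody picture in Figure \ref{fig:Cork}, since the statement only asks for the \emph{existence} of such a surface (equivalently, the upper bound $g(\gamma)\le n$) and not for its minimality. The first step is to pass from the $4$--dimensional handle picture to an honest surgery description of the boundary. For the purpose of identifying $\partial W_n(k)$ as a $3$--manifold, one replaces the dotted circle (the $1$--handle) by a $0$--framed unknot $K_0$; this presents $\partial W_n(k)$ as Dehn surgery on the two component link $K_0\cup K\subset S^3$, where $K$ is the attaching circle of the $2$--handle carrying framing $k$ and $\gamma$ is a meridian of $K$. Because $\gamma$ is a meridian, the surface I produce will meet the diagram only near $K_0\cup K$ and along the twist region, never interacting with the longitudinal framing of $K$; this is what makes the construction uniform in $k$, so it suffices to treat one convenient value and the same picture works for all others.

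Next I would produce the surface. As a meridian of $K$, the curve $\gamma$ bounds an obvious flat disk $D$ in $S^3$ that meets the rest of the diagram only in the strands of $K$ and $K_0$ that thread through it. To upgrade $D$ to a surface lying in the surgered manifold I resolve each such intersection by tubing: where a strand passes through $D$, I delete a small subdisk and glue in an annulus running parallel to that strand, closing it off using the surgery solid torus so that the result is again an embedded surface with boundary $\gamma$. Each homology class of the complement that $\gamma$ must bound over in order to become null-homologous in the homology sphere $\partial W_n(k)$ forces one such tube, and the key point is that the $n$--fold repetition built into $W_n(k)$ — the twist region parametrised by $n$ in Figure \ref{fig:Cork} — produces exactly $n$ of them. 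Tracking orientations while tubing keeps the surface connected and orientable, and an Euler characteristic count of the tubed disk then exhibits a genus $n$ surface with the single boundary component $\gamma$.

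The main obstacle is the bookkeeping in this last step: I must verify that the $n$ tubes can be routed as \emph{disjointly embedded} annuli in the boundary $3$--manifold — rather than merely immersed, or pushed off into the interior of the $4$--manifold — and that after all cancellations the genus is exactly $n$ and not accidentally larger. Checking embeddedness reduces to showing that the parallel annuli thread through distinct portions of the twist region without creating new intersections, which is precisely where the explicit symmetry of the diagram is used; orientability and connectivity should follow formally. Finally I would stress that the argument yields only the upper bound $g(\gamma)\le n$. Lowering the genus to one when $n>1$ — which is exactly what Proposition \ref{indep} requires in order to promote the $\mathbb{Z}/2\mathbb{Z}$--graded isomorphism to an absolutely $\mathbb{Q}$--graded one — is not achieved by this construction and remains the open point noted above.
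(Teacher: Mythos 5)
Your first step (trading the dotted circle for a $0$--framed unknot $K_0$, so that $\partial W_n(k)$ is surgery on the two--component link $K_0\cup K$ with coefficients $0$ and $k$) agrees with the paper. But the construction you build on top of it has a genuine gap, located exactly where you start: at the spanning disk of $\gamma$. Since $\gamma$ is a meridian of $K$, \emph{every} disk it bounds in $S^3$ meets $K$ with algebraic intersection number $\mathrm{lk}(\gamma,K)=1$, and that single leftover intersection cannot be resolved by your tubing move. A tube running parallel to a strand must either return to a second intersection point of opposite sign (there is no such point for the net $+1$ intersection with $K$) or be capped inside the surgery solid torus of $K$ --- but the meridian of $K$ does not bound in the $k$--framed surgery torus for any finite $k$. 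Indeed $\mu_K$ is nonzero in $H_1(S^3\setminus(K_0\cup K))$ and only dies because of the relation imposed by the $0$--surgery on $K_0$; so any surface bounded by $\gamma$ is forced to run into the surgery solid torus of $K_0$, and the object that witnesses this is not a tube but (a once--punctured copy of) the capped--off Seifert surface of $K_0$. This also contradicts your claim that the surface ``never interacts with the longitudinal framing of $K$'': starting from a meridian disk of $K$ guarantees an essential interaction with $K$ that your scheme does not remove. Finally, your genus count (``the twist region produces exactly $n$ tubes'') is asserted rather than derived, and you yourself flag the embeddedness and Euler characteristic bookkeeping as unresolved.

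The paper's argument turns the picture around and thereby avoids all of this. Using $\mathrm{lk}(\gamma,K)=-\mathrm{lk}(K_0,K)=1$, the two--component link $\gamma\cup K_0$ is null--homologous in the complement of $K$, hence bounds an embedded surface $S$ there; Seifert's algorithm applied to this link (it is $K_0$, winding through the twist region, that carries the complexity) shows $g(S)=n$. Since $S$ is disjoint from $K$, the framing $k$ is irrelevant, and the $0$--surgery on $K_0$ caps off the $K_0$ boundary component, leaving a genus $n$ surface bounded by $\gamma$. If you want to salvage your write-up, replace the tubed meridian disk by this punctured Seifert surface of $K_0$; your closing observation that the argument only gives the upper bound $g(\gamma)\le n$, and that improving it to genus one is what Proposition \ref{indep} would need, is correct and matches the paper's own remark.
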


\begin{proof}
(Compare \cite{AK}).Replace the dotted circle with a $0$ framed $2$-handle in Figure \ref{fig:Cork}, this operation does not change the boundary. We see that $\partial W_n(k)$ is obtained by  doing surgery on a symmetric link $K_1 \cup K_2\subset S^3$ where the surgery coefficients are $k$ and $0$ respectively. For some choice of orientations we make sure that linking numbers  satisfy $\mathrm{lk}(\gamma,K_1)=-\mathrm{lk}(K_2,K_1)=1$. Therefore $\gamma$ and $K_2$ together bound an embedded surface $S$ in the complement of $K_1$. We apply Seifert's algorithm to see that the surface has genus $n$ . While doing $k$ framed surgery on $K_1$ has no effect on this surface, doing zero surgery caps off one of its boundary components. The resulting surface is what we have been looking for. An illustration of the Seifert surface is drawn in Figure \ref{fig:seifert}.
\end{proof}
\begin{figure}[h]
	\includegraphics[width=0.40\textwidth]{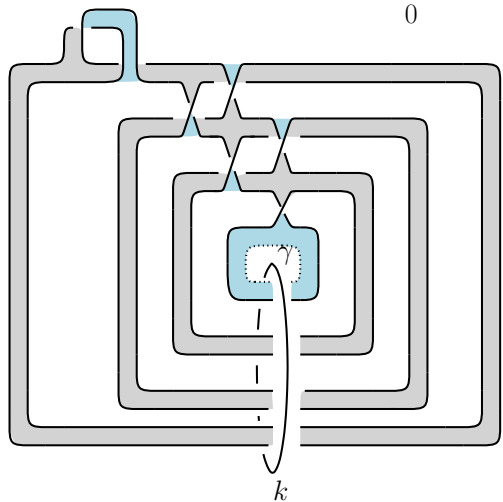}
		\caption{Seifert surface of $\gamma$ in $W_3(k)$}
	\label{fig:seifert}
\end{figure}

\begin{rem}
Combining the above results with \cite{AK} we see that when $n=1$,  

$$HF^+(-\Sigma(2,5,7))\simeq HF^+(-\Sigma(3,4,5))\simeq HF^+(-\Sigma(2,3,13))\simeq \mathcal{T}^+_{(0)}\oplus \mathbb{Z}_{(0)}\oplus \mathbb{Z}_{(0)}$$

\noindent This was previously observed in  \cite{AD} by similar tecniques, but applied in  a somewhat different way. 
\end{rem}

\begin{rem}
One can calculate the Seifert matrix from the Seifert surface in proposition \ref{prop:seif}.  From this, it is easy to see that the Alexander polynomial of $\gamma$ is trivial.
\end{rem}

\begin{theo}\label{handleplum}
For every $n=1,2,3,\cdots$, the $3$--manifold $\partial W_n(2n+1)$ bounds a $4$--manifold which is obtained by taking several disk bundles  over spheres and plumbing them together according to the almost rational graph indicated in Figure \ref{Plumb}.
\end{theo}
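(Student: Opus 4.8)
The plan is to prove the statement by Kirby calculus. Starting from the handlebody picture of $W_n(2n+1)$ in Figure \ref{fig:Cork}, I will transform it, using only moves that leave the boundary $3$--manifold fixed, into a framed link in \emph{plumbing form}: a collection of unknots meeting pairwise in Hopf clasps according to a tree, with each framing equal to the Euler number of the corresponding disk bundle. Once the diagram is in plumbing form the framed link is exactly the $2$--handle attaching data of the plumbed $4$--manifold, so that the plumbing has boundary $\partial W_n(2n+1)$ and the theorem follows.

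The first step is to replace the dotted circle (the $1$--handle) by a $0$--framed $2$--handle, exactly as in the proof of Proposition \ref{prop:seif}. This does not change $\partial W_n(2n+1)$ and presents it as integral surgery on the two--component link $K_1 \cup K_2$ of that proof, with surgery coefficients $2n+1$ and $0$. Here $K_2$ is unknotted, while $K_1$ carries all the clasping and twisting that depends on $n$; removing this dependence is the substance of the argument.

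The main obstacle is the second step: converting $K_1 \cup K_2$ into a plumbing tree by a sequence of blow--ups, blow--downs, handle slides and isotopies. This is where the special framing $k=2n+1$ enters. The twist region of $K_1$ is unwound by repeatedly introducing a $(\pm 1)$--framed meridian (a blow--up), sliding strands across it to absorb a twist, and then blowing it down; equivalently, the framing $2n+1$ admits a continued fraction expansion whose terms become the weights of a chain of vertices. I expect to organize this as an induction on $n$ (equivalently on the number of full twists): each blow--down trades one twist for one new vertex of prescribed weight and reduces the $n$--twisted picture to the $(n-1)$--twisted one. The delicate point is to keep the framings and the linking pattern bookkept correctly and uniformly in $n$, so that the process terminates in a tree. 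That it terminates at all, rather than leaving a residual $1$--handle, is guaranteed by the algebraic cancellation condition built into the definition of a Mazur manifold.

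The final step is to compare the resulting diagram with Figure \ref{Plumb}, matching the weights and the combinatorial shape of the tree, and then to verify that the tree is negative definite. Almost rationality is read off the shape: it suffices to exhibit a single vertex $v_0$ such that lowering its weight sufficiently produces a rational graph, and this holds in particular whenever the tree is star--shaped, with $v_0$ the central vertex. With negative definiteness and the single--bad--vertex condition confirmed, the plumbing description of $\partial W_n(2n+1)$ is exactly of the form required for the algorithmic computation of $HF^+$ promised in the introduction.
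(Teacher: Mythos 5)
Your approach is essentially the same as the paper's: its proof of Theorem \ref{handleplum} consists precisely of a sequence of handlebody (Kirby) moves --- trading the dotted circle for a $0$--framed $2$--handle and then isotoping, sliding, blowing up and blowing down until the diagram is in plumbing form --- which are displayed in Figure \ref{fig:hepsi} for $n=3$, with the general case asserted to be analogous. The only thing you have not supplied is the explicit sequence of moves itself (the actual content of the argument, which the paper delegates entirely to that figure); your strategic outline, including the role of the framing $2n+1$ and the final check of almost rationality, matches what the paper does.
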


\begin{figure}[h]
	\includegraphics[width=0.50\textwidth]{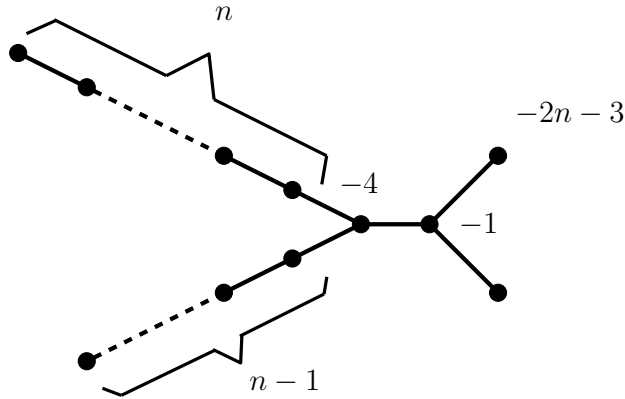}
	\caption{The plumbing configuration bounded by $\partial W_n(2n+1)$. Unlabeled vertices have weight $-2$.}
	\label{Plumb}
\end{figure}

\begin{proof}

The proof follows by a sequence of handlebody moves. They are indicated in Figure \ref{fig:hepsi} for $n=3$. The general case can be handled similarly.

\end{proof}

For any oriented $3$--manifold $Y$, let $-Y$ denote the the same $3$--manifold with the opposite orientation. It is easy to determine the effect of orientation reversal on Heegaard Floer homology, \cite{OS5}, \cite{OS2}. In  view of this fact, our theorem implies that the Heegaard Floer homology of the family of $3$-manifolds that we are interested in can be calculated combinatorially.

\begin{cor}
There is an algorithm calculating $HF^+(-\partial W_n(2n+1))$ for every $n$.
\end{cor}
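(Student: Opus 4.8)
The plan is to invoke the combinatorial description of Heegaard Floer homology for boundaries of negative-definite plumbings. By Theorem \ref{handleplum}, $\partial W_n(2n+1)$ is (orientation-preservingly) diffeomorphic to $Y(G)$, the boundary of the plumbed four-manifold $X(G)$ associated to the graph $G$ of Figure \ref{Plumb}. Since $G$ is almost rational it is in particular a negative-definite plumbing tree, so $X(G)$ is a negative-definite four-manifold; reversing orientation, $-\partial W_n(2n+1)\cong -Y(G)$, which is precisely the orientation in which the Ozsv\'ath--Szab\'o combinatorial algorithm is stated. This is the reason the corollary is phrased for the reversed orientation.

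First I would recall the algorithm of Ozsv\'ath and Szab\'o for plumbed three-manifolds: for a negative-definite plumbing tree with at most one bad vertex, $HF^+(-Y(G))$ is isomorphic to a combinatorially defined $\mathbb{Z}[U]$--module $\mathbb{H}^+(G)$ built from the characteristic covectors of the intersection lattice together with the relations encoding the first Chern class (adjunction) data. Every ingredient of $\mathbb{H}^+(G)$ --- the lattice, its characteristic vectors, the grading, and the $U$--action --- is read off directly from the weighted graph $G$, so the output is manifestly algorithmic.

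The one hypothesis that must be verified is that $G$ lies within the range to which the algorithm applies, and this is exactly where the almost rational condition does its work. N\'emethi's extension of the Ozsv\'ath--Szab\'o computation, via the graded root attached to $G$ and the determination of the associated $\tau$--function from a computation sequence, covers all almost rational graphs, a strictly larger class than ``at most one bad vertex''. Thus, granting Theorem \ref{handleplum}, the corollary reduces to the already-asserted fact that $G$ is almost rational; no further geometric input is needed.

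Were one to establish the corollary from scratch rather than citing the established machinery, the main obstacle would be proving the almost rational property of $G$ itself --- equivalently, exhibiting a vertex whose sufficiently negative modification yields a rational graph, and confirming negative-definiteness of the intersection form. Theorem \ref{handleplum} already packages this, so the corollary follows essentially formally: combining the identification $-\partial W_n(2n+1)\cong -Y(G)$ with the cited algorithm produces a combinatorial computation of $HF^+(-\partial W_n(2n+1))$ for every $n$, as claimed.
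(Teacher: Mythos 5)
Your argument follows the paper's proof essentially verbatim: both reduce the corollary, via the identification of Theorem \ref{handleplum}, to N\'emethi's combinatorial algorithm (with the Ozsv\'ath--Szab\'o computation as the special case of at most one bad vertex), with the sole substantive point being that the graph $G$ is almost rational. The only difference is that the paper actually carries out that check --- replacing the central weight $-1$ by $-3$ yields a rational graph (Section 6.2 of \cite{N}) --- whereas you defer to the adjective ``almost rational'' in the statement of Theorem \ref{handleplum}; since that is the one hypothesis doing any work, the one-line verification belongs in the proof rather than being taken as already established.
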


\begin{proof}
Follows from result Nemethi \cite{N}(See also, Ozsv\'ath and Szab\'o \cite{OS1}) . The only thing that we should check is whether the plumbing graph $G$ is almost rational in the sense of \cite{N} Definition 8.1. Indeed by replacing the weight of the central vertex from $-1$ to $-3$ we get a rational graph (see section 6.2 of \cite{N}).
\end{proof}

We write a computer code in Magma that implements Nemethi's algorithm to calculate the Heegaard Floer homology of $-\partial W_n (2n+1)$. We list the Heegaard Floer homologies of fist three members of this family  in Corollary  \ref{Heeg} below to give the reader an idea of what these groups look like. See Table \ref{tab:tau} for the data that determines $HF^+(-\partial W_n(2n+1)$  for $n=1..7$. Our code is available online at \cite{K}.

\begin{cor}\label{Heeg}
Let $\mathbb{Z}^r_{(d)}$ be the quotient module $\mathbb{Z}[U]/U^r\mathbb{Z}[U]$ graded so that $U^{r-1}$ has degree $d$. Then we have the following identifications:
\begin{enumerate}
	\item $HF^+(-W_1(3))\simeq \mathcal T _0^+ \bigoplus (\mathbb{Z}_{(0)})^2$
	\item $HF^+(-W_2(5))\simeq \mathcal T _0^+  \bigoplus (\mathbb{Z}_{(0)})^4\oplus  (\mathbb{Z}_{(2)})^2\oplus (\mathbb{Z}_{(10)})^2$
	\item $HF^+(-W_3(7))\simeq \mathcal T _0^+  \bigoplus (\mathbb{Z}_{(0)})^4\oplus (\mathbb{Z}^2_{(0)})^2\oplus (\mathbb{Z}_{(2)})^2\oplus (\mathbb{Z}_{(4)})^2\oplus (\mathbb{Z}_{(12)})^2\oplus (\mathbb{Z}_{(14)})^2\oplus  (\mathbb{Z}_{(18)})^2\oplus (\mathbb{Z}_{(42)})^2$
\end{enumerate}
\end{cor}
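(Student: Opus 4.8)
The plan is to run, for $n \in \{1,2,3\}$, the algorithm furnished by the preceding corollary on the explicit plumbing graphs $G_n$ of Figure \ref{Plumb}. By Theorem \ref{handleplum} each $\partial W_n(2n+1)$ is the boundary of the plumbing $X(G_n)$, and since $\partial W_n(2n+1)$ is an integral homology sphere there is a single \spinc structure to consider; its correction term is forced to be $0$ by Proposition \ref{prop:str}, which provides a useful check on the output.

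First I would record the intersection lattice $(L_n,(\cdot,\cdot))$ of $G_n$ together with its canonical characteristic vector $k$, normalized by $(k,E_v) = -(E_v,E_v)-2$ on each generator $E_v$. Following Nemethi \cite{N}, I would then build a computation sequence $0 = x_0, x_1, x_2, \ldots$ of lattice vectors, increasing one coordinate at a time in the direction dictated by the almost rational structure, and evaluate the Riemann--Roch function $\chi_k(x) = -\tfrac{1}{2}\big((k,x)+(x,x)\big)$ along it. The resulting sequence of integers $\tau(i) = \chi_k(x_i)$ determines a graded root $R_{G_n}$, whose single infinite stem gives the tower $\mathcal{T}^+$ and whose finite branches encode the reduced part. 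Applying Nemethi's functor to $R_{G_n}$ yields $HF^+$ of the plumbed manifold, and a final orientation reversal --- using the known behavior of $HF^+$ under $Y \mapsto -Y$ for homology spheres, \cite{OS5}, \cite{OS2} --- converts this into $HF^+(-\partial W_n(2n+1))$, which I would then rewrite in the $\mathbb{Z}[U]/U^r\mathbb{Z}[U]$ notation of the statement.

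The content here is computational rather than conceptual: each case reduces to evaluating a quadratic function along a finite computation sequence and reading off the shape of its graph. The main obstacle is therefore one of size and bookkeeping. Already for $n = 2,3$ the reduced part splits into several summands in distinct gradings, and the Casson-invariant argument given later shows that $\mathrm{rank}\,HF^{\mathrm{red}}$ grows without bound in this family, so the length of the computation sequence and the number of branches of the graded root increase rapidly with $n$. For this reason I would not carry out the cases by hand but implement the tau-function computation directly --- exactly as the authors do with their Magma code \cite{K} --- and tabulate the output, using $d=0$ and the parity of the gradings as consistency checks.
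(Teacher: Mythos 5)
Your proposal matches the paper's approach: the paper offers no separate argument for this corollary beyond Theorem \ref{handleplum}, the algorithm of the preceding corollary, and a Magma implementation of Nemethi's computation-sequence/tau-function/graded-root pipeline, whose output for $n=1,2,3$ (with the reduced tau functions recorded in Table \ref{tab:tau}) is exactly the displayed groups. The only small caveat is that Nemethi's graded root already yields $HF^+$ of the orientation-reversed boundary of the plumbing, so the ``final orientation reversal'' you describe is built into the algorithm rather than being a separate step.
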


Presumably, it is possible to find a closed formula for these Heegaard Floer homologies for every $n$ using Nemethi's techniques. Though the authors weren't able to find such a formula, one can easily verify that the rank of Heegaard Floer homology gets arbitrarily large with $n$ using a different method.

\begin{cor}\label{cor:rate}
$\mathrm{rank}HF^{\mathrm{red}}(-\partial W_n(k))=-\lambda(\partial W_n(k))=n(n+1)(n+2)/3$.
\end{cor}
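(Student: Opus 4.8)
The plan is to establish the three asserted equalities in turn, reducing everything to two essentially independent computations: a homological one identifying the rank of the reduced group with the Casson invariant, and a geometric one evaluating that Casson invariant. By Proposition \ref{indep} the group $HF^{\mathrm{red}}(\partial W_n(k))$, and in particular its rank, is independent of $k$; the Casson invariant will likewise turn out to be $k$--independent, so it suffices to treat the two quantities abstractly and to pin down their common value once.

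First I would prove $\mathrm{rank}\,HF^{\mathrm{red}}(-\partial W_n(k)) = -\lambda(\partial W_n(k))$. Since $\partial W_n(k)$ bounds the contractible manifold $W_n(k)$, Proposition \ref{prop:str} gives $d(\partial W_n(k))=0$, and the same holds after orientation reversal. The Euler characteristic formula $\chi(HF^{\mathrm{red}}(Y))=\lambda(Y)+d(Y)/2$ then collapses to $\chi(HF^{\mathrm{red}}(-\partial W_n(k)))=\lambda(-\partial W_n(k))=-\lambda(\partial W_n(k))$, using that the Casson invariant is odd under orientation reversal. To upgrade this Euler characteristic to an honest rank I would show that $HF^{\mathrm{red}}(-\partial W_n(k))$ is supported entirely in even $\mathbb{Z}/2\mathbb{Z}$--grading, so that no cancellation occurs and $\chi=\mathrm{rank}$ with the correct sign. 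For $k=2n+1$ this parity statement follows from Theorem \ref{handleplum}: $-\partial W_n(2n+1)$ bounds the negative of an almost rational plumbing, and by the results of Ozsv\'ath--Szab\'o and N\'emethi on plumbed manifolds (\cite{OS1}, \cite{N}) the reduced homology of such a manifold lies in even degree (the samples in Corollary \ref{Heeg} exhibit exactly this). Because the isomorphism of Proposition \ref{indep} preserves the $\mathbb{Z}/2\mathbb{Z}$--grading for every $k$, the even--parity conclusion propagates to all $k$, and the first equality follows.

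The remaining and genuinely hard task is the evaluation $-\lambda(\partial W_n(k))=n(n+1)(n+2)/3$. Here the main tool is Casson's surgery formula $\lambda(Y_{1/(m\pm 1)}(K))-\lambda(Y_{1/m}(K))=\pm\tfrac12\,\Delta_K''(1)$ from \cite{AM}, applied twice. The $k$--independence is immediate: passing from $\partial W_n(k)$ to $\partial W_n(k+1)$ is a $-1$--surgery on $\gamma$, and by the Remark following Proposition \ref{prop:seif} the curve $\gamma$ has trivial Alexander polynomial, so $\Delta_\gamma''(1)=0$ and $\lambda$ does not change with $k$ (consistent with Proposition \ref{indep}). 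It therefore suffices to compute $\lambda(\partial W_n)$ for one framing, and I would do this inductively in $n$, realizing $\partial W_n$ from $\partial W_{n-1}$ by a single $\pm 1$--surgery on an auxiliary knot $K_n$ whose Seifert surface is visible in the handle picture (Proposition \ref{prop:seif} already exhibits a genus $n$ surface, from which the Seifert matrix, and hence $\Delta_{K_n}$, can be read off, as noted in the second Remark). The arithmetic to aim for is $\tfrac12\,\abs{\Delta_{K_n}''(1)}=n(n+1)$; the telescoping sum then gives $\abs{\lambda(\partial W_n)}=\sum_{j=1}^{n}j(j+1)=n(n+1)(n+2)/3$, with the overall sign fixed by matching the base case $n=1$, where Corollary \ref{Heeg} forces $\mathrm{rank}=2$ and hence $\lambda(\partial W_1)=-2$.

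I expect the crux of the whole argument to be this last Casson computation, and specifically identifying the correct auxiliary knots $K_n$ (or, alternatively, the correct presentation of $\partial W_n$ as surgery on a single knot in $S^3$, obtained by absorbing the $0$--framed component $K_2$ of Proposition \ref{prop:seif}, which links $K_1$ once) and extracting the second derivative $\Delta_{K_n}''(1)=-2n(n+1)$ of their Alexander polynomials from the explicit Seifert surfaces. The homological half of the corollary is comparatively soft, relying only on $d=0$, the Euler characteristic formula, and the even--parity of the plumbing calculation; the genuine content is the quadratic growth $j(j+1)$ of the Alexander data at each stage, which is what ultimately produces the cubic $n(n+1)(n+2)/3$.
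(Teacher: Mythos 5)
Your treatment of the first equality, $\mathrm{rank}\,HF^{\mathrm{red}}(-\partial W_n(k)) = -\lambda(\partial W_n(k))$, is correct and is essentially the argument in the paper: $d=0$ from Proposition \ref{prop:str}, the Euler characteristic formula, even-degree support coming from Theorem \ref{handleplum} together with the result of \cite{OS1} on plumbed manifolds, and Proposition \ref{indep} to carry the conclusion to all framings $k$. Your observation that the $k$-independence of $\lambda$ also follows directly from the surgery formula and the triviality of $\Delta_\gamma$ is a sensible cross-check that the paper does not spell out.

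The gap is in the evaluation $-\lambda(\partial W_n) = n(n+1)(n+2)/3$, which you yourself flag as ``the crux.'' Your plan --- realize $\partial W_n$ from $\partial W_{n-1}$ by a single $\pm 1$--surgery on an auxiliary knot $K_n$ and telescope Casson's surgery formula --- is not carried out at any point: you do not exhibit the knots $K_n$ (it is not even established that consecutive members of the family are related by a single such surgery), you do not compute their Alexander polynomials, and the key identity $\tfrac12 |\Delta_{K_n}''(1)| = n(n+1)$ is obtained by working backwards from the desired answer rather than derived. The alternative you mention, absorbing the $0$--framed component $K_2$ to present $\partial W_n$ as surgery on a single knot in $S^3$, also does not go through as stated: $K_2$ is not a meridian of $K_1$, so there is no slam-dunk, and no such knot presentation is produced. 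The paper avoids all of this: having placed $\partial W_n(2n+1)$ on the boundary of the explicit plumbing of Theorem \ref{handleplum}, it evaluates $\lambda$ either by citing the computation in \cite{H} or via the Casson invariant formula for plumbed manifolds in Section 2.4.2 of \cite{N}, with the minors of the intersection matrix reinterpreted as orders of first homology groups as in \cite{NN}. To complete your argument you would need either to supply the knots $K_n$ and their Seifert matrices explicitly, or to fall back on one of these plumbing computations.
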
 

\begin{proof}

When $Y$ bounds an almost rational graph, $HF^+(-Y)$ is supported only in even degrees \cite{OS1}, implying $\chi(HF^{\mathrm{red}}(-Y)=\mathrm{rank}HF^{\mathrm{red}}(-Y)$.  Therefore by Theorem \ref{handleplum} and  Propositions \ref{indep} and \ref{prop:str}, it suffices to  show that $\lambda(-W_n(2n+1))=n(n+1)(n+2)/3$. This calculation is done in \cite{H}. Alternatively, this can be seen from the Casson invariant formula for plumbings given in section 2.4.2 of \cite{N}. Note that the formula involves calculation certain minors of the intersection matrix whose rank increases with $n$. One can get around that problem by realizing that each minor  actually corresponds to the order of the first homology of the three manifold obtained by deleting a vertex, and converting the resulting integral surgery diagram to a rational surgery diagram. The details are left to the reader (see the discussion in Section 5.1 of \cite{NN}).  
\end{proof}

\begin{rem}

It is known that every even integer can be realized as the Casson invariant of some homology sphere bounding a Mazur type manifold \cite{M}. However it is still an open question whether this could be achieved in the sub class of integral homology spheres which bound both a Mazur type manifold and an almost rational plumbing. 

\end{rem}
\newpage

\begin{figure}[h]
	\includegraphics[width=.80\textwidth]{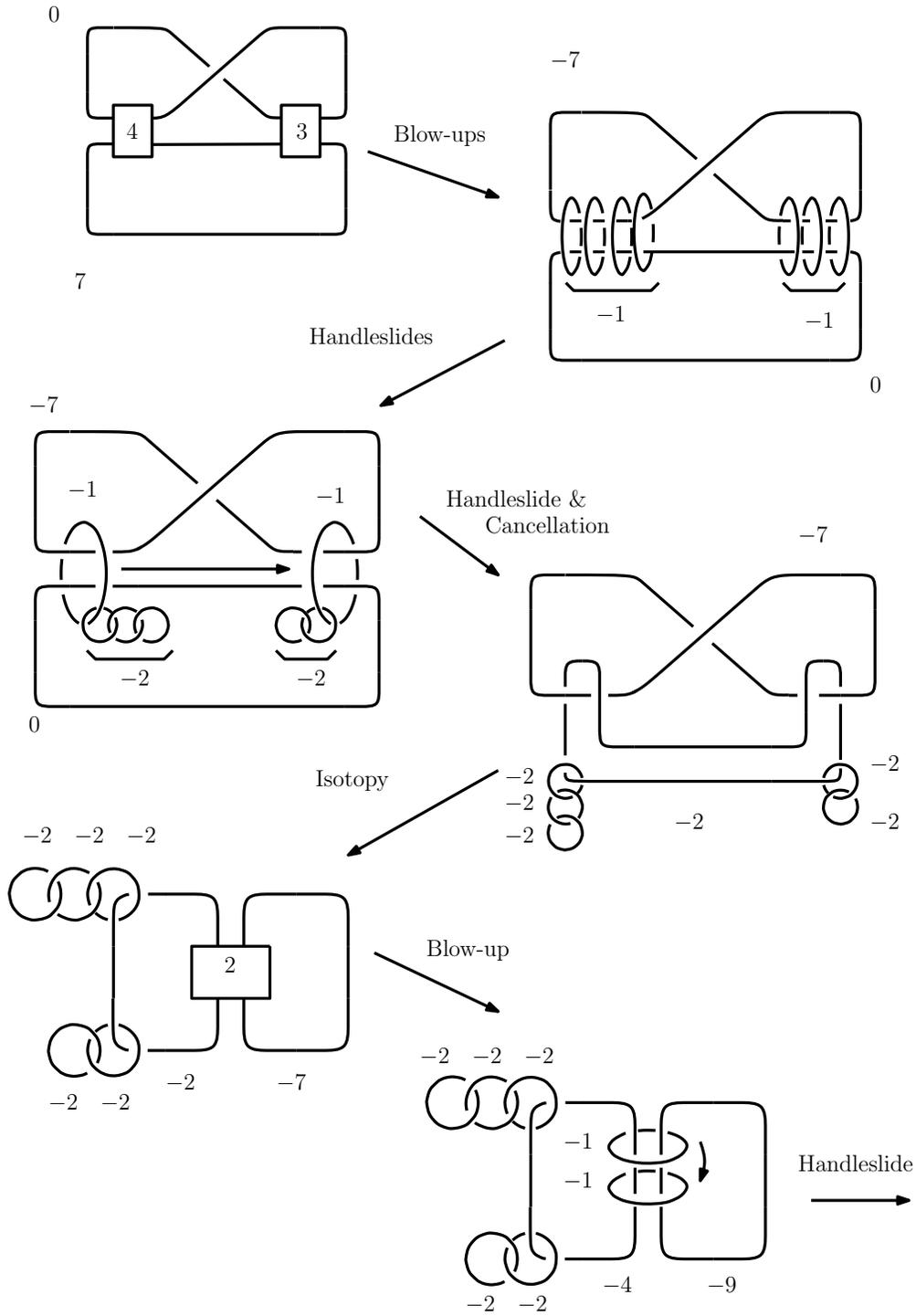}
	\caption{Handlebody moves showing that $\partial W_n(2n+1)$ bounds the plumbing indicated in Figure \ref{Plumb}.}
	\label{fig:hepsi}
\end{figure}

\newpage
 
\section{User's guide to Nemethi's Algorithm}

The aim of this section is to summarize the method of calculating the Heegaard Floer homology groups given in \cite{N}. This procedure is indicated schematically in Figure \ref{fig:Procedure}. In order to get the Heegaard Floer homology, one needs to calculate some intermediate objects, so--called computational sequence, tau function and graded root. We will define these objects and tell how they are used in the process in the following subsections. We are going to discuss only a special case where the three manifold is an integral homology sphere though the algorithm more generally works for rational homology spheres as well if one has an extra ingredient associated to \spinc structures.  

\begin{figure}[h]
	\includegraphics[width=0.80\textwidth]{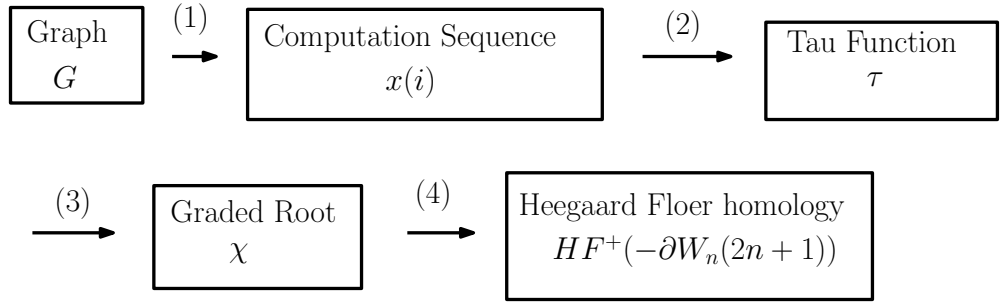}
	\caption{Steps of Nemethi's method}
	\label{fig:Procedure}
\end{figure}

\subsection{From graph to computation sequence:}\label{sec:alg}

Let $G_n$ be the plumbing graph indicated in Figure \ref{Plumb}. Denote its vertices by $\{v_0,v_1,\cdots, v_{2n+2}\}$. Choose the indices so that $v_0$ is the vertex with decoration $-1$. For any vertex $v$, let $m(v)$ and $\delta(v)$ denote the weight of $v$ and the number of edges connected to $v$ respectively. We distinguish the vertex $v_0$  because it is the only one satisfying the inequality $|m(v)|<\delta(v)$. Let $X(G_n)$ be the plumbed $4$--manifold. Each vertex $v$ represents a generator of $H_2(X(G_n))$ which will be denoted by the same letter. Let $(,)$ be the intersection form on $H_2(X(G_n))$. Then

$$(v_i,v_j)= \left \{
\begin{tabular}{ll}
$m(v_i)$ if $i=j$\\
1 if there is an edge connecting $v_i$ to $v_j$ \\
0 if there is no edge connecting $v_i$ to $v_j$.
\end{tabular} 
\right .$$

Inductively define a sequence $x(i)\in H_2(X(G_n))$ by first requiring that $x(0)=0$, and then obtaining  $x(i+1)$ from $x(i)$ by connecting them via a computation sequence $y_1,y_2,\cdots,y_l$ which is defined as follows
\begin{enumerate}
\item $y_1=x(i)+v_0$
\item Suppose $y_r$ is known. If $(y_r,v_j) > 0$ for some $j\neq 0$ then define $y_{r+1}:=y_r+v_j$
\item Repeat step (2) until you find $y_l$, such that  $(y_l,v_j)\leq 0$ for all $j\neq 0$ then declare that $x(i+1):=y_l$.
\end{enumerate}

Although $x(i)$ is an infinite sequence, only a finite portion of it will be used in the next step of the algorithm.

\subsection{From computation sequence to tau function:}

Using the output of the previous step, we inductively construct a function $\tau:\mathbb{N}\to \mathbb{Z}$:

\begin{itemize}
\item $\tau(0)=0$.
\item $\tau(i+1)=\tau(i)+1- (x(i),v_0)$.
\end{itemize}

It can be shown that $\tau$ is ultimately increasing. Indeed if $i_0$ is the first index with $\tau(i_0)=2$ then $\tau (i)\leq \tau (i+1)$ for all $i \geq i_0$. For the remaining part, we only need the restricton of $\tau$ to the set $\{1,2,3,\cdots,i_0\}$.

Regard $\tau$ as a finite sequence. Choose a  sub--sequence by first killing all the consecutive repetitions, and then taking all the local maxima and local minima of what  is left. We also drop the last entry which is equal to $2$. For example the tau function obtained from the graph $G_1$  is $[0,1,0,0,0,0,0,0,0,0,0,1,0,0,0,1,1,1,1,1,1,2]$ which reduces to $[0,1,0,1,0]$ after the reduction. We list the reduced tau functions of $G_n$ for $n=1..7$ in Table  \ref{tab:tau}. The data associated to larger values of $n$ will be available online soon.

\subsection{From tau function to graded root:} Given a natural number $i$, let $R_i$ denote the infinte tree  with the vertex set $\mathcal V :=\{ v^j: j\geq i\}$ and the edge set $\mathcal E:=\{[v^j,v^{j+1}]:j\geq i \}$. This tree also admits a grading function $\chi : \mathcal V\to \mathbb{Z}$, with $\chi (v^j)=j$. From the (reduced) tau function $\tau :\{0,1,2,\cdots, 2k\}$, we can construct an infinite tree

$$R_\tau:=R_{\tau (0)} \sqcup R_{\tau (1)}\sqcup R_{\tau (2)} \cdots \sqcup R_{\tau (2k)}/\sim,$$

\noindent where the equivalence relation $\sim$ is defined as follows:the reduced $\tau$ function attains its local maxima at odd integers.  Therefore $R_{\tau (2i+1)}$ naturally injects into both $R_{\tau (2i)}$, and $R_{\tau (2i+2)}$ in a way that preserves the grading. We identify $R_{\tau (2i+1)}$ with its images in $R_{\tau (2i)}$, and $R_{\tau (2i+2)}$ for every $i=0,1,2,..,k-1$. Then the grading functions on each $R_{\tau (j)}$ descend to a grading function $\chi_\tau$ on $R_\tau$.  The pair $(R_\tau,\chi _\tau)$ is called the graded root associated with $\tau$. An illustration of this construction is given in figure \ref{fig:ConGradedRoot}  when $\tau=[0,1,0,1,0]$. We draw the graded roots of the graphs $G_n$, for $n=1,2,3$ in Figure \ref{fig:GradedRoots}.

\begin{figure}[h]
	\includegraphics[width=0.50\textwidth]{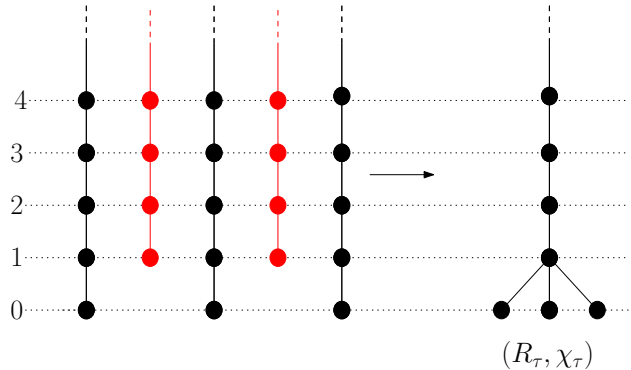}
	\caption{Graded Root of $G_1$.}
	\label{fig:ConGradedRoot}
\end{figure}

\subsection{From Graded root to Heegaard--Floer Homology:} Let $\mathcal V$ and $\mathcal E$ denote the vertex and edge sets of $R_\tau$. Define

$$ \mathbb{H}(R_\tau,\chi _\tau)=\{\phi:\mathcal V \to \mathcal T^+_{(0)} : U \cdot \phi (v)=\phi (w) \; \mathrm{whenever}\;  [v,w]\in \mathcal E \; \mathrm{and}  \; \chi_\tau (v) < \chi_\tau (w)\}$$

The set $ \mathbb{H}(R_\tau,\chi _\tau)$ naturally admits the structure of a $\mathbb{Z}[U]$-module. One can define a grading on it as follows: $\phi \in  \mathbb{H}(R_\tau,\chi _\tau)$ is said to be homogeneous of degree $d$ if $\phi (v)\in \mathcal T ^+ _{(0)}$ is homogeneous of degree $d-2\chi _\tau (v)$ for every $v\in \mathcal V$. Finally, the  Heegaard Floer homology is isomorphic to  $ \mathbb{H}(R_\tau,\chi _\tau)$ with a grade shift. In the cases of interest,  the exact amount of grade shift  can be found by requiring that $d(-\partial W_n(2n+1))=0$.
   
\tiny

\begin{table}[h]
\begin{tabular}{|l|l|}
\hline
$n=1$&$[ 0, 1, 0, 1, 0 ]$\\
\hline
$n=2$&$[ 0, 1, -4, -3, -5, -4, -5, -4, -5, -4, -5, -4, -5, -3, -4, 1, 0 ]$\\
\hline
		 &$[ 0, 1, -12, -11, -14, -13, -15, -14, -19, -18, -20, -19, -21,$ \\
$n=3$&$\; -19, -21, -20, -21, -20, -21, -20, -21, -20, -21, -19, -21,$\\
     &$\; -19, -20, -18, -19, -14, -15, -13, -14, -11, -12, 1, 0]$\\
\hline
		 &$[ 0, 1, -24, -23, -27, -26, -29, -28, -41, -40, -43, -42, -45,$\\
		 &$ -43, -46, -45,-47, -46, -51, -50, -52, -51, -53, -51, -54, -52,$\\
$n=4$&$ -54, -52, -54, -53, -54, -53, -54, -53, -54, -53, -54, -52, -54,$\\
		 &$ -52, -54, -51, -53, -51, -52, -50, -51, -46, -47, -45, -46, -43,$\\
		 &$ -45, -42, -43, -40, -41, -28, -29, -26, -27, -23, -24, 1, 0]$\\
\hline
		 &$[ 0, 1, -40, -39, -44, -43, -47, -46, -71, -70, -74, -73, -77, -75,$\\
		 &$ -79, -78,-81, -80, -93, -92, -95, -94, -97, -95, -99, -97, -100, -98,$\\
		 &$ -101, -100, -102,-101, -106, -105, -107, -106, -108, -106, -109, -107,$\\
		 &$ -110, -107, -110, -108,-110, -108, -110, -109, -110, -109, -110, -109,$\\
$n=5$&$ -110, -109, -110, -108, -110,-108, -110, -107, -110, -107, -109, -106,$\\
		 &$ -108, -106, -107, -105, -106, -101,-102, -100, -101, -98, -100, -97, -99,$\\
		 &$ -95, -97, -94, -95, -92, -93, -80, -81, -78, -79, -75, -77, -73, -74, -70,$\\
		 &$ -71, -46, -47, -43, -44, -39, -40, 1, 0 ]$\\
\hline
		&$[ 0, 1, -60, -59, -65, -64, -69, -68, -109, -108, -113, -112, -117, -115, -120,$\\
		&$-119, -123, -122, -147, -146, -150, -149, -153, -151, -156, -154, -158, -156,$\\
		&$-160, -159, -162, -161, -174, -173, -176, -175, -178, -176, -180, -178, -182,$\\
		&$-179, -183, -181, -184, -182, -185, -184, -186, -185, -190, -189, -191, -190,$\\ 
		&$-192, -190, -193, -191, -194, -191, -195, -192, -195, -192, -195, -193, -195,$\\
$n=6$&$-193, -195, -194, -195, -194, -195, -194, -195, -194, -195, -193, -195, -193,$\\ 
		&$-195, -192, -195, -192, -195, -191, -194, -191, -193, -190, -192, -190, -191,$\\ 
		&$-189, -190, -185, -186, -184, -185, -182, -184, -181, -183, -179, -182, -178,$\\ 
		&$-180, -176, -178, -175, -176, -173, -174, -161, -162, -159, -160, -156, -158,$\\ 
		&$-154, -156, -151, -153, -149, -150, -146, -147, -122, -123, -119, -120, -115,$\\ 
		&$-117, -112, -113, -108, -109, -68, -69, -64, -65, -59, -60, 1, 0 ]$\\
\hline
		&$[ 0, 1, -84, -83, -90, -89, -95, -94, -155, -154, -160, -159, -165, -163, -169,$\\ 
		&$-168, -173, -172, -213, -212, -217, -216, -221, -219, -225, -223, -228, -226,$\\ 
		&$-231, -230, -234, -233, -258, -257, -261, -260, -264, -262, -267, -265, -270,$\\ 
		&$-267, -272, -270, -274, -272, -276, -275, -278, -277, -290, -289, -292, -291,$\\ 
		&$-294, -292, -296, -294, -298, -295, -300, -297, -301, -298, -302, -300, -303,$\\ 
		&$-301, -304, -303, -305, -304, -309, -308, -310, -309, -311, -309, -312, -310,$\\ 
$n=7$&$-313, -310, -314, -311, -315, -311, -315, -312, -315, -312, -315, -313, -315,$\\ 
		&$-313, -315, -314, -315, -314, -315, -314, -315, -314, -315, -313, -315, -313,$\\ 
		&$-315, -312, -315, -312, -315, -311, -315, -311, -314, -310, -313, -310, -312,$\\ 
		&$-309, -311, -309, -310, -308, -309, -304, -305, -303, -304, -301, -303, -300,$\\ 
		&$-302, -298, -301, -297, -300, -295, -298, -294, -296, -292, -294, -291, -292,$\\ 
		&$-289, -290, -277, -278, -275, -276, -272, -274, -270, -272, -267, -270, -265,$\\ 
		&$-267, -262, -264, -260, -261, -257, -258, -233, -234, -230, -231, -226, -228,$\\ 
		&$-223, -225, -219, -221, -216, -217, -212, -213, -172, -173, -168, -169, -163,$\\ 
		&$-165, -159, -160, -154, -155, -94, -95, -89, -90, -83, -84, 1, 0]$\\
\hline
\end{tabular}
\caption{Reduced $\tau$ functions associated to the plumbings indicated in Figure \ref{Plumb} for $n=1,2,\cdots,7$.}
\label{tab:tau}
\end{table}

\normalsize

\begin{figure}[h]
	\includegraphics[width=0.70\textwidth]{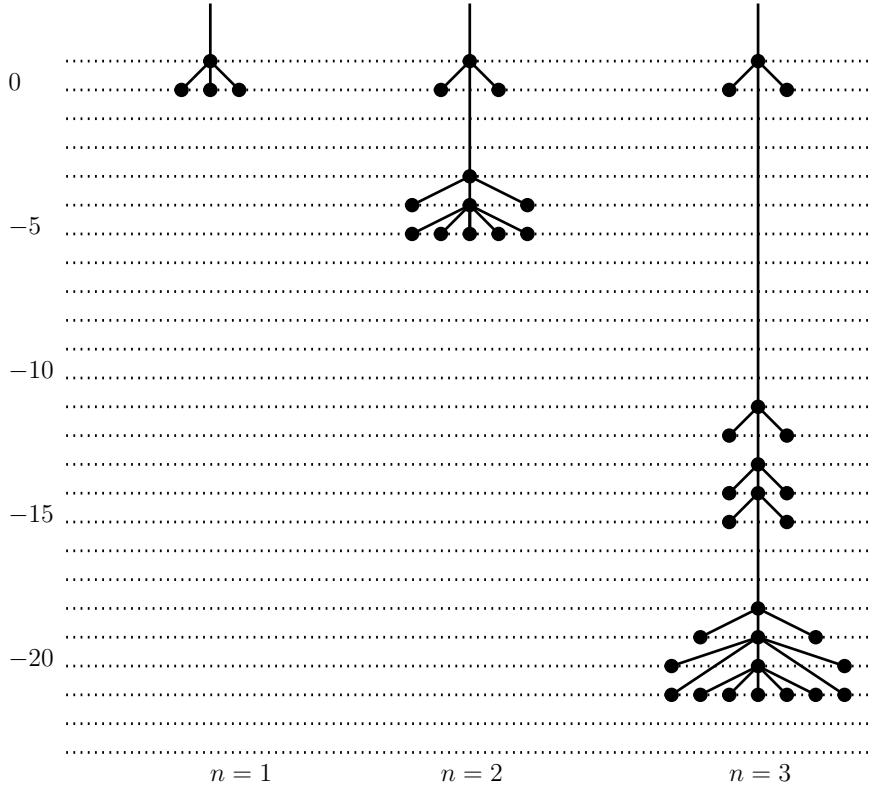}
	\label{fig:GradedRoots}
	\caption{The graded roots generated by $\tau$ functions in Table \ref{tab:tau}, for $n=1,2,3$.}
\end{figure}

\section{Some Brieskorn Spheres Bounding Contractible Manifolds}\label{sec:Bries}
Thus far we observed that if a Mazur type manifold has the same boundary as a plumbing up to change of framings then its Ozsv\'ath--Szab\'o invariant can be calculated combinatorially. We have seen this happenning in an infinite family of contractible manifolds. In \cite{CH}, Casson and Harer constructed many examples of this sort. Our goal in this section is to study Heegaard Floer homologies of their examples, and to make a comparison with the results of the calculations that we made in the previous sections.

Given pairwise relatively prime natural numbers $p,q,r$, the Brieskorn manifold $\Sigma (p,q,r)$ is defined as the intersection of the complex hypersurface $z_1^p+z_2^q+z_3^r=0$ with a five dimensional sphere centered at the origin with small radius.

\begin{prop}The following Brieskorn manifolds bound Mazur type manifolds.
\begin{enumerate}
	\item $\Sigma (p,ps\pm 1, ps \pm 2)$, $p$ odd.
	\item $\Sigma (p, ps -1, ps+1)$, $p$ even $s$ odd.
\end{enumerate}
\end{prop}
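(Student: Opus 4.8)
The plan is to exhibit, for each family, an explicit handle decomposition with one handle in each index $0,1,2$ whose $2$--handle algebraically cancels the $1$--handle, i.e.\ a picture of the type in Figure \ref{fig:Cork}. The natural starting point is a surgery presentation of the Brieskorn sphere. Since $\Sigma(p,q,r)$ is Seifert fibered over $S^2$ with three exceptional fibers, it is the boundary of the star--shaped plumbing determined by $p,q,r$, equivalently the result of surgery on a central unknot (the regular fiber) together with three meridional unknots carrying the rational coefficients dictated by the Seifert invariants. Expanding these rational coefficients through the Euclidean algorithm turns the diagram into an integer surgery diagram on a tree of unknots, which is the form on which Kirby moves are easiest to organize.

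The crux is that the arithmetic hypotheses are precisely the conditions under which this diagram can be put in a form symmetric under an involution, the phenomenon exploited by Casson and Harer \cite{CH}. Concretely, writing $q=ps\pm1$ and $r=ps\pm2$ (case (1)), or $q=ps-1$ and $r=ps+1$ (case (2)), the continued--fraction data produced by the Euclidean algorithm assemble into a palindromic pattern, endowing the surgery diagram with a reflection symmetry. Folding the plumbed $4$--manifold along this symmetry --- or, in handle language, blowing down and sliding handles so that one symmetric half is converted into a dotted circle and the complementary strand is left as a single $2$--handle running through it --- produces a manifold built from one $1$--handle and one $2$--handle. I would carry this out explicitly for small parameters and then argue uniformly, since the reduction is governed entirely by the symmetric continued fractions.

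It then remains to verify the two defining features of a Mazur picture: that the attaching curve of the $2$--handle links the dotted circle algebraically once, and that the result is an integral (not merely rational) homology ball, so that the manifold is genuinely contractible. This is where the parity hypotheses enter --- $p$ odd in case (1), and $p$ even with $s$ odd in case (2) --- since they control the framing arithmetic and the number of strands threading the $1$--handle, forcing the algebraic linking to be $\pm1$ and the framing to be integral rather than fractional. The main obstacle I anticipate is exactly this bookkeeping: propagating the Euclidean--algorithm reduction from the Seifert presentation to a provably symmetric diagram, and checking that each of the sign and parity cases collapses to a clean Mazur diagram rather than to a rational homology ball with nontrivial first homology. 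Organizing all the Kirby calculus around the reversing involution, so that the folding step is uniform once symmetry is established, is what I expect to make the argument tractable.
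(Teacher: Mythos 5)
The paper does not actually prove this statement; it simply cites Neumann--Wahl \cite{NW}, with the underlying construction due to Casson and Harer \cite{CH}. So your proposal should be judged as a standalone argument, and as such it has a genuine gap. The central device you rely on --- that the star--shaped plumbing presentation of $\Sigma(p,q,r)$ acquires a reflection symmetry from ``palindromic'' continued fractions, along which one can fold to produce a dotted circle --- is not justified and is doubtful as stated. A star--shaped graph with three legs determined by $(p,p')$, $(q,q')$, $(r,r')$ has no reflection symmetry unless two legs coincide, and here the legs for $q=ps\pm1$ and $r=ps\pm2$ (or $ps-1$ and $ps+1$) are genuinely different; the continued fraction expansions of $q/q'$ and $r/r'$ are not palindromes of one another in any sense you have established. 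Everything downstream of that claim (the ``folding,'' the conversion of ``one symmetric half'' into a dotted circle) therefore has no foundation, and the closing plan to ``carry this out explicitly for small parameters and then argue uniformly'' defers exactly the content that would constitute a proof. You also never verify $H_1=0$ except by gesturing at parity; for a three--legged plumbing reduced by unspecified Kirby moves this is precisely the step that can silently fail and leave only a rational homology ball.

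The actual Casson--Harer argument runs in the opposite direction and avoids these problems: one writes down the candidate Mazur manifold first --- $S^1\times D^3$ with a single $2$--handle attached along an explicitly described (generally knotted) curve homologous to $S^1\times\{pt\}$, which makes contractibility immediate --- and then identifies the boundary with the stated Brieskorn sphere by exhibiting the Seifert fibration, with the arithmetic conditions on $p$ and $s$ entering in matching the Seifert invariants rather than in any framing bookkeeping. If you want a self--contained proof, I would recommend reconstructing that direction of the argument (or simply citing \cite{CH}/\cite{NW} as the paper does) rather than attempting to reverse--engineer the Mazur picture from the plumbing by symmetry.
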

\begin{proof}
See \cite{NW}
\end{proof}

\begin{rem}
Most of these Mazur type manifolds do not  give us  corks even if we are allowed to  change the framing of the $2$--handle. This is because the their two handles are knotted, so there is no obvious involution on the boundary.
\end{rem}

 Any Brieskorn manifold bounds an almost rational plumbing. Indeed, let $e_0,p',q',r'$ be the unique integers satisfying $0<p'<p$, $0<q'<q$, $0<r'<r$, and

$$e_0pqr+p'qr+pq'r+pqr'=-1.$$

\noindent Then $\Sigma (p,q,r)$ is the Seifert fibred space with Seifert invariants $(e_0,p,p',q,q',r,r')$, and as such it bounds a star shaped plumbing with three branches. The central vertex has weight $e_0$, and replacing this weight with $-3$ makes the graph rational.

One consequence of the fact that Brieskorn manifolds bound almost rational plumbings is that their Heegaard Floer homology can be calculated combinatorially. In fact Nemethi gives an explicit formula for their (unreduced) tau function, which determines the Heegaard Floer homology, in terms of the Seifert invariants, \cite{N}. This formula is in general somewhat difficult to handle in order to say something about Heegaard Floer homology of infinite families of manifolds,  a couple of remarkable exceptions being \cite{BN} and \cite{N2}.

\tiny
\begin{table}[h]
\begin{tabular}{|c|c|c|}
\hline
& &\\
Brieskorn manifold&$\Sigma (p, ps\pm 1,ps\pm 2)$& $\Sigma (p, ps- 1,ps+ 1)$\\
$\Sigma (p,q,r)$& $p$ odd & $p$ even, $s$ odd\\
& &\\
\hline
& &\\
Seifert Invariants & $\displaystyle(-1,p,\frac{p-1}{2},ps \pm 1,s,ps \pm 2,\frac{ps-s\pm 2}{2}) $ & $\displaystyle(-1,p,1,ps - 1,\frac{ps-s -2}{2},ps+1,\frac{ps-s +2}{2}) $\\
$(e_0,p,p',q,q',r,r')$& $p$ odd& $p$ even, $s$ odd\\
& &\\
\hline
& &\\
tau function& $\displaystyle \sum_{j=0}^{n-1}1+j-\left \lceil \frac{j(p-s)}{2p} \right \rceil -  \left\lceil \frac{js}{ps\pm 1} \right\rceil- \left\lceil \frac{j(ps-s\pm 2)}{2ps \pm 4} \right\rceil $ &$\displaystyle \sum_{j=0}^{n-1}1+j-\left \lceil \frac{j}{p} \right \rceil -  \left \lceil \frac{j(ps-s-1)}{2ps- 2} \right\rceil- \left\lceil \frac{j(ps-s+ 1)}{2ps +2} \right\rceil $\\
$\tau (n)$ & & \\
& &\\
\hline
& &\\
$\mathrm{rank}(HF^{\mathrm{red}}(-\Sigma))$&$\displaystyle \frac{s(p^2-1)(ps\pm 3)}{24}$& $\displaystyle \frac{p^3s^2}{24}-\frac{ps^2}{24}- \frac{p}{8}$\\
$=-\lambda (\Sigma)$& &\\
& &\\
\hline
\end{tabular}
\caption{.}
\label{tab:bries}
\end{table}

\normalsize

We can calculate the  rank of Casson Harer manifolds, without appealing to Nemethi's formula, by proceeding as in the proof of Corollary \ref{cor:rate}. Since these manifolds bound both a contractible manifold and an almost rational plumbing, their correction term is zero and their Heegaard Floer homology is supported only in even degrees. Therefore, it suffices to calculate the Casson invariants. In \cite{NW} Lemma 1.5, an explicit formula of Casson invariants of Brieskorn manifolds is given. The formula involves certain Dedekind sums, which could be calculated by repeatedly applying the  reciprocity law. In table \ref{tab:bries} we listed all the above mentioned invariants of these manifolds. One interesting feature in this table is that the rank of the Heegaard Floer homology increases in the order of $p^3$, which is somehow consistent with what we observed in Corollary \ref{cor:rate}.

 \textit{Acknowledgements:} The final part of this work was completed while both authors were participating in the FRG Workshop in Miami. We would like to thank N. Saveliev for organizing this workshop. The second author would like to thank R. Gompf for bringing \cite{CH} to his attention, and P. Ozsv\'ath for illuminating discussions.


\begin{thebibliography}{100}

\bibitem{AD} S. Akbulut and S. Durusoy, \emph{An involution acting non-trivially on Heegaard-Floer homology}. Fields Institute Communications, vol 47, (2005),1-9.
\bibitem{AKa} S. Akbulut and \c C. Karakurt, \emph{Action of the cork twist on Floer homology }. arXiv:1104.2247 To appear in Proceedings of G\"okova Geometry-Topology Conference. 
\bibitem{AK} S. Akbulut and R. Kirby, \emph{Mazur manifolds}.Michigan Math. J. 26 (1979), no. 3, 259–284. 
\bibitem{AM} S. Akbulut and J.D. McCarthy, \emph{Casson's invariant for oriented homology 3-spheres.
An exposition.} Mathematical Notes, 36. Princeton University Press, Princeton, NJ, 1990.  
\bibitem{AY} S. Akbulut and K. Yasui, \emph{Corks, Plugs and exotic structures.} Jour. of GGT, vol 2 (2008) 40-82.
\bibitem{BN} M. Borodzik and A. N\'emethi, \emph{Heegaard Floer homologies for (+1) surgeries on torus knots.} arXiv:1105.5508.
\bibitem{CH} A. Casson and J. L.  Harer, \emph{Some homology lens spaces which bound rational homology balls.}
Pacific J. Math. 96 (1981), no. 1, 23–36. 
\bibitem{H} E. Harper, \emph{Akbulut corks and non-trivial involutions on instanton Floer homology}. In preperation.
\bibitem{K} \c C. Karakurt, http://www.math.utexas.edu/users/karakurt/.
\bibitem{M} Y. Mizuma, \emph{ On the Casson invariant of homology 3-spheres of Mazur type.} Topology Appl. 156 (2009), no. 13, 2292–2294.
\bibitem{N} A. N\'emethi, \emph{On the Ozsv\'ath-Szab\'o invariant of negative definite plumbed 3-manifolds.} 
Geom. Topol. 9 (2005), 991–1042. 
\bibitem{N2} A. N\'emethi \emph{On the Heegaard Floer homology of} $S^3_{-p/q}(K)$. arXiv:math/0410570.
\bibitem{NN} A. N\'emethi and L. I. Nicolaescu,\emph{Seiberg-Witten invariants and surface singularities.} Geom. Topol. 6 (2002), 269–328. 
\bibitem{NW} W. Neumann and J. Wahl,\emph{Casson invariant of links of singularities.} Comment. Math. Helv. 65 (1990), no. 1, 58–78.
\bibitem{OS4} P.Ozsv\'ath and Z Szab\'o,\emph{Holomorphic disks and topological invariants for closed three-manifolds.}  Ann. of Math. (2)  159  (2004),  no. 3, 1027--1158. 
\bibitem{OS5} P.Ozsv\'ath and Z Szab\'o,\emph{Holomorphic disks and three-manifold invariants: properties and applications.}  Ann. of Math. (2)  159  (2004),  no. 3, 1159--1245. 
\bibitem{OS2} P.Ozsv\'ath and Z. Szab\'o,\emph{Absolutely graded Floer homologies and intersection forms for four-manifolds with boundary.} Adv. Math. 173 (2003), no. 2, 179–261.
\bibitem{OS6} P.Ozsv\'ath and Z. Szab\'o,\emph{Holomorphic triangles and invariants for smooth four-manifolds.} Adv. Math. 202 (2006), no. 2, 326–400.
\bibitem{OS7} P.Ozsv\'ath and Z. Szab\'o, \emph{Holomorphic triangle invariants and the topology of symplectic four-manifolds.}  Duke Math. J.  121  (2004),  no. 1, 1--34
\bibitem{OS1} P. Ozsv\'ath and Z. Szab\'o. \emph{On the Floer homology of plumbed three-manifolds}. Geom. Topol. 7(2003) 185-224.
\bibitem{S} N. Saveliev, \emph{A Note on Akbulut Corks}, Math. Res. Lett. 10 (2003) 777–785.
\end{thebibliography}
\end{document}